\theoremstyle{plain}
\newtheorem{definition}{Definition}
\newtheorem{thm}[definition]{Theorem}
\newtheorem{cor}[definition]{Corollary}
\DeclareMathOperator{\Ima}{Im}
\title{Remark on Laquer's theorem for circulant determinants}
\author{Naoya Yamaguchi and Yuka Yamaguchi}
\begin{document}

\maketitle

\begin{abstract}
Olga Taussky-Todd suggested the problem of determining the possible values of integer circulant determinants. 
To solve a special case of the problem, Laquer gave a factorization of circulant determinants. 
In this paper, 
we give a modest generalization of Laquer's theorem. 
Also, we give an application of the generalization to integer group determinants. 
\end{abstract}

\section{Introduction}

For a finite group $G$, 
let $x_{g}$ be an indeterminate for each $g \in G$, and let $\mathbb{Z}[x_{g}]$ be the multivariate polynomial ring in $x_{g}$ over $\mathbb{Z}$. 
The group determinant $\Theta(G)$ of $G$ is defined as follows (\cite{Frobenius1968gruppen}, \cite{Frobenius1968gruppencharaktere}; see also \cite{MR1659232}, \cite{MR1085397}, \cite{johnson2019group}): 
$$
\Theta(G) := \det{\left( x_{g h^{- 1}} \right)}_{g, h \in G} \in \mathbb{Z}[x_{g}]. 
$$
When $G$ is a cyclic group $\mathbb{Z} / n \mathbb{Z} = \left\{ \overline{0}, \overline{1}, \ldots, \overline{n - 1} \right\}$, 
the polynomial $\Theta(G)$ is called a circulant determinant and written as $C_{n}(x_{1}, x_{2}, \ldots, x_{n})$, 
where $x_{i + 1}$ denotes $x_{\overline{i}}$. 
When $x_{1}, x_{2}, \ldots, x_{n}$ are integers, 
$C_{n}(x_{1}, x_{2}, \ldots, x_{n})$ is called an integer circulant determinant. 
Olga Taussky-Todd suggested the problem of determining the possible values of integer circulant determinants \cite{MR550657}; 
that is, determining the set 
$$
S(n) := \{ C_{n}(x_{1}, x_{2}, \ldots, x_{n}) \mid x_{1}, x_{2}, \ldots, x_{n} \in \mathbb{Z} \}. 
$$ 
To solve the $n = 2 p$ case, where $p$ is an odd prime, 
Laquer \cite[Theorem~2]{MR624127} gave the following theorem: 
{\it 
Let $n = r s$, where $r$ and $s$ are relatively prime. 
Then 
\begin{align*}
C_{n}(x_{1}, x_{2}, \ldots, x_{n}) = \prod_{i = 0}^{s - 1} C_{r} \left( y_{1}^{i}, y_{2}^{i}, \ldots, y_{r}^{i} \right), \quad y_{j}^{i} := \sum_{k = 0}^{s - 1} \zeta_{s}^{i (k r + j - 1)} x_{k r + j}, 
\end{align*}
where $\zeta_{s}$ is a primitive $s$-th root of unity.} 
We call this theorem Laquer's theorem. 
From this theorem, 
when $r$ is odd and $s = 2$, 
an integer circulant determinant $C_{n}(x_{1}, x_{2}, \ldots, x_{n})$ can be written as a product of $2$ integer circulant determinants. 
As a result of this consideration, 
Laquer \cite[Theorem~10]{MR624127} showed that 
$$
S(2 p) = (\mathbb{Z}_{2}^{*} \cup 4 \mathbb{Z}) \cap (\mathbb{Z}_{p}^{*} \cup p^{2} \mathbb{Z}), 
$$
where $\mathbb{Z}_{m}^{*} := \{ a \in \mathbb{Z} \mid \gcd(a, m) = 1\}$ for any positive integer $m$.

Laquer's theorem was proved by using a special case of Dedekind's theorem. 
For a finite group $G$, 
let $\widehat{G}$ be a complete set of representatives of the equivalence classes of irreducible representations of $G$ over $\mathbb{C}$. 
Dedekind's theorem is as follows (e.g., \cite{MR1554141}, \cite{MR1606416}, \cite{MR803326},  \cite{MR3622295}): 
{\it When $G$ is abelian, $\Theta(G)$ can be factorized into irreducible polynomials over $\mathbb{C}$ as 
$$
\Theta(G) = \prod_{\chi \in \widehat{G}} \sum_{g \in G} \chi(g) x_{g}. 
$$
}

Let $\Theta(G) \left[ x_{g} \mapsto y_{g} \right]$ be the polynomial obtained by replacing the variable $x_{g}$ in $\Theta(G)$ with $y_{g}$ for any $g \in G$. 
From Dedekind's theorem, we obtain a modest generalization of Laquer's theorem. 

\begin{thm}\label{thm:1}
Let $G = H \times K$ be a direct product of finite abelian groups. 
Then we have 
$$
\Theta(G) = \prod_{\chi \in \widehat{K}} \Theta(H) \left[ x_{h} \mapsto \sum_{k \in K} \chi(k) x_{h k} \right]. 
$$
\end{thm}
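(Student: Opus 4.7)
The plan is to derive the identity directly from Dedekind's theorem, which the excerpt has already stated, together with the standard description of the irreducible characters of a direct product of abelian groups.

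First I would apply Dedekind's theorem to $G = H \times K$ to write
\[
\Theta(G) = \prod_{\psi \in \widehat{G}} \sum_{g \in G} \psi(g)\, x_{g}.
\]
Next I would use the fact that, for abelian $H$ and $K$, every $\psi \in \widehat{G}$ is uniquely of the form $\psi(hk) = \phi(h)\chi(k)$ for some $\phi \in \widehat{H}$ and $\chi \in \widehat{K}$, and that the assignment $(\phi,\chi) \mapsto \psi$ is a bijection between $\widehat{H}\times\widehat{K}$ and $\widehat{G}$. Parametrizing $g = hk$ with $h \in H$, $k \in K$, this turns the displayed product into
\[
\Theta(G) = \prod_{\chi \in \widehat{K}} \prod_{\phi \in \widehat{H}} \sum_{h \in H} \phi(h) \left( \sum_{k \in K} \chi(k)\, x_{hk} \right).
\]

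Then, for each fixed $\chi \in \widehat{K}$, I would set $y_{h} := \sum_{k \in K} \chi(k)\, x_{hk}$ and recognize the inner product over $\phi \in \widehat{H}$ as another instance of Dedekind's theorem, now applied to $H$ with indeterminates $y_{h}$:
\[
\prod_{\phi \in \widehat{H}} \sum_{h \in H} \phi(h)\, y_{h} = \Theta(H)\!\left[ x_{h} \mapsto y_{h} \right].
\]
Substituting this back and taking the product over $\chi \in \widehat{K}$ yields the claimed formula.

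There is no real obstacle: the proof is essentially bookkeeping once one has Dedekind's theorem in hand. The only point that needs a sentence of justification is the bijection $\widehat{H}\times\widehat{K} \cong \widehat{G}$, which is standard for direct products of abelian groups (since $|\widehat{H}\times\widehat{K}| = |H||K| = |G| = |\widehat{G}|$ and the tensor characters are pairwise distinct). Everything else amounts to swapping the order of two products and relabeling the inner sum.
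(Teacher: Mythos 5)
Your proposal is correct and follows essentially the same route as the paper: apply Dedekind's theorem to $G$, use $\widehat{G} = \widehat{H} \times \widehat{K}$ to split each character as $\phi\chi$, swap the two products, and recognize the inner product over $\widehat{H}$ as $\Theta(H)$ evaluated at $y_{h} = \sum_{k \in K} \chi(k) x_{hk}$ via Dedekind's theorem again. Your extra sentence justifying the bijection $\widehat{H}\times\widehat{K}\cong\widehat{G}$ is a welcome but minor addition that the paper takes for granted.
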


We apply this theorem to integer group determinants. 
A group determinant called an integer group determinant when its variables are integers. 
For a finite group $G$, let 
\begin{align*}
S(G) := \{ \det{( x_{g h^{-1}} )_{g, h \in G}} \mid x_{g} \in \mathbb{Z} \}, \quad 
S(G)_{{\rm even}} := S(G) \cap 2 \mathbb{Z}. 
\end{align*}

\begin{thm}\label{thm:2}
Let $G = H \times ( \mathbb{Z} / 2 \mathbb{Z} )^{l}$, where $H$ is a finite abelian group, 
and let 
$$
M := \max{ \{ m \in \mathbb{N} \mid 2^{m} \: \text{divides every element of} \: \: S(H)_{{\rm even}} \} }. 
$$
Then we have 
$$
S(G)_{{\rm even}} \subset 2^{M \cdot 2^{l}} \mathbb{Z}. 
$$
\end{thm}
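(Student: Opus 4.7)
\emph{Proof plan.} The plan is to apply Theorem~\ref{thm:1} with $K = (\mathbb{Z}/2\mathbb{Z})^{l}$. Since every character $\chi \in \widehat{K}$ of an elementary abelian $2$-group takes values in $\{\pm 1\}$, whenever the variables $x_{g}$ are specialized to integers the quantities
$$
y_{h}^{\chi} := \sum_{k \in K} \chi(k)\, x_{hk} \qquad (h \in H)
$$
are again integers. Hence Theorem~\ref{thm:1} expresses an arbitrary integer group determinant of $G$ as a product of $2^{l}$ integer group determinants of $H$, each of which therefore lies in $S(H)$.

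Next I would show that these $2^{l}$ factors all have the same parity. For any $\chi \in \widehat{K}$ and any $h \in H$, the congruence $\chi(k) \equiv 1 \pmod{2}$ gives $y_{h}^{\chi} \equiv \sum_{k \in K} x_{hk} \pmod{2}$, so the vectors $(y_{h}^{\chi})_{h \in H}$ coincide modulo $2$ independently of $\chi$. Because $\Theta(H)$ is an integer polynomial in its variables, substituting integer tuples that agree modulo $2$ produces values that agree modulo $2$. Consequently all $2^{l}$ factors are mutually congruent modulo $2$, and in particular share a common parity.

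Finally, suppose the value of $\Theta(G)$ at a given integer assignment lies in $S(G)_{\rm even}$. Then the product of the $2^{l}$ factors is even, so at least one factor is even, and by the previous step \emph{every} factor is then even. Each even factor lies in $S(H)_{\rm even}$, and so is divisible by $2^{M}$ by the very definition of $M$. Multiplying the $2^{l}$ factors yields divisibility by $2^{M \cdot 2^{l}}$, establishing the desired inclusion $S(G)_{\rm even} \subset 2^{M \cdot 2^{l}}\mathbb{Z}$. I do not foresee a serious obstacle; the argument hinges entirely on the elementary observation that the characters of $(\mathbb{Z}/2\mathbb{Z})^{l}$ take values $\pm 1$, which is what allows the mod-$2$ comparison of the factors produced by Theorem~\ref{thm:1}.
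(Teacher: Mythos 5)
Your proposal is correct and follows essentially the same route as the paper: apply Theorem~\ref{thm:1} with $K=(\mathbb{Z}/2\mathbb{Z})^{l}$, note that the $\pm 1$ character values make each factor an integer group determinant of $H$, observe that all factors are congruent modulo $2$, and conclude that evenness of $\Theta(G)$ forces every factor into $S(H)_{\rm even}$ and hence divisibility by $2^{M\cdot 2^{l}}$. The only difference is that you spell out the mod-$2$ congruence and the ``one even implies all even'' step, which the paper leaves implicit.
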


It is well-known that  $S( \mathbb{Z} / 2 \mathbb{Z} )_{{\rm even}} = 4 \mathbb{Z}$. 
Also, Kaiblinger \cite[Theorem~1.1, Example~3.3]{MR2914452} showed that $S(\mathbb{Z} / 2^{n} \mathbb{Z})_{{\rm even}} \subset 2^{n + 2} \mathbb{Z}$ and $S(\mathbb{Z} / 2^{n} \mathbb{Z})_{{\rm even}} \not\subset 2^{n + 3} \mathbb{Z}$ for $n \geq 2$. 
Therefore, as a corollary of Theorem~$\ref{thm:2}$, 
we have the following. 

\begin{cor}\label{cor:3}
For any $n \geq 2$, the following hold: 
\begin{enumerate}
\item[$(1)$] For $G = ( \mathbb{Z} / 2 \mathbb{Z} )^{n} = \mathbb{Z} / 2 \mathbb{Z} \times ( \mathbb{Z} / 2 \mathbb{Z} )^{n - 1}$, 
we have $S(G)_{{\rm even}} \subset 2^{2 \cdot 2^{n - 1}} \mathbb{Z} = 2^{2^{n}} \mathbb{Z}$; 
\item[$(2)$] For $G = \mathbb{Z} / 2^{n} \mathbb{Z} \times \mathbb{Z} / 2 \mathbb{Z}$, 
we have $S(G)_{{\rm even}} \subset 2^{2 (n + 2)} \mathbb{Z}$. 
\end{enumerate}
\end{cor}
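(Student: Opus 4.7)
The plan is to derive both statements as immediate applications of Theorem~\ref{thm:2}, with appropriate choices of the subgroup $H$ and the exponent $l$ in the decomposition $G = H \times (\mathbb{Z}/2\mathbb{Z})^{l}$. The only substantive ingredient beyond invoking the theorem is to correctly read off the value of the exponent $M$ from the facts about $S(\mathbb{Z}/2^{n}\mathbb{Z})_{\mathrm{even}}$ recalled just before the corollary.

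For part~(1) I would take $H = \mathbb{Z}/2\mathbb{Z}$ and $l = n-1$. The well-known equality $S(\mathbb{Z}/2\mathbb{Z})_{\mathrm{even}} = 4\mathbb{Z}$ shows that $4$ divides every even element while $8$ does not, so the maximality condition in the definition of $M$ in Theorem~\ref{thm:2} forces $M = 2$. Substituting into the conclusion of Theorem~\ref{thm:2} then gives
$$
S(G)_{\mathrm{even}} \subset 2^{M \cdot 2^{l}} \mathbb{Z} = 2^{2 \cdot 2^{n-1}} \mathbb{Z} = 2^{2^{n}} \mathbb{Z},
$$
which is the stated bound.

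For part~(2) I would take $H = \mathbb{Z}/2^{n}\mathbb{Z}$ and $l = 1$. Kaiblinger's result recalled in the excerpt states that $S(\mathbb{Z}/2^{n}\mathbb{Z})_{\mathrm{even}} \subset 2^{n+2} \mathbb{Z}$ but $S(\mathbb{Z}/2^{n}\mathbb{Z})_{\mathrm{even}} \not\subset 2^{n+3} \mathbb{Z}$; these two together force $M = n+2$. Theorem~\ref{thm:2} then yields
$$
S(G)_{\mathrm{even}} \subset 2^{(n+2) \cdot 2} \mathbb{Z} = 2^{2(n+2)} \mathbb{Z}.
$$

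Because the corollary is essentially a bookkeeping step on top of Theorem~\ref{thm:2}, I do not expect a genuine obstacle; the only point requiring care is to use the \emph{maximality} in the definition of $M$ rather than just an upper bound for $2$-adic divisibility, since the conclusion of Theorem~\ref{thm:2} becomes strongest when $M$ is as large as possible.
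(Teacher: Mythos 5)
Your proposal is correct and is exactly the derivation the paper intends: it decomposes $G$ as $\mathbb{Z}/2\mathbb{Z} \times (\mathbb{Z}/2\mathbb{Z})^{n-1}$ (resp.\ $\mathbb{Z}/2^{n}\mathbb{Z} \times \mathbb{Z}/2\mathbb{Z}$), reads off $M=2$ from $S(\mathbb{Z}/2\mathbb{Z})_{\mathrm{even}}=4\mathbb{Z}$ (resp.\ $M=n+2$ from Kaiblinger's result), and applies Theorem~\ref{thm:2}; the paper states the corollary without further proof because this is the whole argument. (The only cosmetic remark: for the stated inclusions one really only needs $M$ to be at least $2$ resp.\ $n+2$, i.e.\ the containments; the non-containments just pin down $M$ exactly and guarantee the maximum is finite.)
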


For the $n = 2$ and $3$ cases of $G$ in $(1)$, 
$S(G)$ were determined in \cite{MR3879399} and \cite{MR4056860}, respectively, 
and for the $n = 2$ case of $G$ in $(2)$, 
$S(G)$ was determined in \cite{MR4056860}: 
\begin{align*}
S( ( \mathbb{Z} / 2 \mathbb{Z})^{2} ) &= \{ 4 m + 1, \: 2^{4} (2 m + 1), \: 2^{6} m \mid m \in \mathbb{Z} \}, \\ 
S( ( \mathbb{Z} / 2 \mathbb{Z})^{3} ) &= \{ 8 m + 1, \: 2^{8} (4 m + 1), \: 2^{12} m \mid m \in \mathbb{Z} \}, \\ 
S( \mathbb{Z} / 4 \mathbb{Z} \times \mathbb{Z} / 2 \mathbb{Z} ) &= \{ 8 m + 1, \: 2^{8} m \mid m \in \mathbb{Z} \}. 
\end{align*}
These results imply that the upper inclusions in Corollary~$\ref{cor:3}$~$(1)$ and $(2)$ are best possible in the sense that 
\begin{align*}
S((\mathbb{Z} / 2 \mathbb{Z} )^{n})_{{\rm even}} \not\subset 2^{2^{n} + 1} \mathbb{Z}, \quad 
S(\mathbb{Z} / 4 \mathbb{Z} \times \mathbb{Z} / 2 \mathbb{Z})_{{\rm even}} \not\subset 2^{9} \mathbb{Z}, 
\end{align*}
where $n = 2, 3$. 
Furthermore, 
for $G = (\mathbb{Z} / 2 \mathbb{Z} )^{n}$ with an arbitrary $n$, 
putting $x_{e} = 2$ and $x_{g} = 0$ for all $g \neq e$, where $e$ is the unit element of $G$, 
we have $\Theta(G) = 2^{2^{n}}$. 
That is, 
$
S(G)_{{\rm even}} \not\subset 2^{2^{n} + 1} \mathbb{Z}. 
$

\section{Proof of Theorems}

\begin{proof}[Proof of Theorem~$\ref{thm:1}$]
From Dedekind's theorem and $\widehat{G} = \widehat{H} \times \widehat{K}$, we have
\begin{align*}
\Theta(G) 
&= \prod_{\chi \in \widehat{G}} \sum_{g \in G} \chi(g) x_{g} \\ 
&= \prod_{\chi \in \widehat{H}} \prod_{\chi' \in \widehat{K}} \sum_{h \in H} \sum_{k \in K} \chi(h) \chi'(k) x_{h k} \\ 
&= \prod_{\chi' \in \widehat{K}} \prod_{\chi \in \widehat{H}} \sum_{h \in H} \chi(h) \left( \sum_{k \in K} \chi'(k) x_{h k} \right) \\ 
&= \prod_{\chi' \in \widehat{K}} \Theta(H) \left[ x_{h} \mapsto \sum_{k \in K} \chi'(k) x_{h k} \right]. 
\end{align*}
\end{proof}

We show that Laquer's theorem is a special case of Theorem~$\ref{thm:1}$. 
Let $G = \mathbb{Z} / n \mathbb{Z}$ and $n = r s$ where $r$ and $s$ are relatively prime. 
Since 
$$
G = \langle s \rangle \times \langle r \rangle = \left\{ \overline{a s} + \overline{b r} \mid 0 \leq a \leq r - 1, \: 0 \leq b \leq s - 1 \right\}, 
$$
from Theorem~$\ref{thm:1}$, we have 
\begin{align*}
C_{n}(x_{\overline{0}}, x_{\overline{1}}, \ldots, x_{\overline{n - 1}}) = \prod_{i = 0}^{s - 1} \prod_{l = 0}^{r - 1} \sum_{a = 0}^{r - 1} \zeta_{r}^{l (a s)} y_{\overline{a s}}^{i}, \quad y_{\overline{a s}}^{i} := \sum_{b = 0}^{s - 1} \zeta_{s}^{i (b r)} x_{\overline{a s + b r}}. 
\end{align*}
From $\left\{ a s + b r \mid 0 \leq a \leq r - 1, \: 0 \leq b \leq s - 1 \right\} = \left\{ j - 1 + k r \mid 1 \leq j \leq r, \: 0 \leq k \leq s - 1 \right\}$, 
\begin{align*}
\sum_{a = 0}^{r - 1} \zeta_{r}^{l (a s)} y_{\overline{a s}}^{i} 
&= \sum_{a = 0}^{r - 1} \sum_{b = 0}^{s - 1} \zeta_{r}^{l (a s + b r)} \zeta_{s}^{i (a s + b r)} x_{\overline{a s + b r}} \\ 
&= \sum_{j = 1}^{r} \sum_{k = 0}^{s - 1} \zeta_{r}^{l (j - 1 + k r)} \zeta_{s}^{i (j - 1 + k r)} x_{\overline{j - 1 + k r}} \\ 
&= \sum_{j = 1}^{r} \zeta_{r}^{l (j - 1)} \sum_{k = 0}^{s - 1} \zeta_{s}^{i (j - 1 + k r)} x_{\overline{j - 1 + k r}}. 
\end{align*}
Therefore, denoting $x_{\overline{i}}$ by $x_{i + 1}$, 
we obtain Laquer's theorem.

\begin{proof}[Proof of Theorem~$\ref{thm:2}$]
Let $K := ( \mathbb{Z} / 2 \mathbb{Z} )^{l}$ and let $x_{g} \in \mathbb{Z}$ for all $g \in G$. 
For any $\chi \in \widehat{K}$, 
since $\Ima{(\chi)} \in \{ \pm 1 \}$, 
$\alpha_{\chi} := \Theta(H) \left[ x_{h} \mapsto \sum_{k \in K} \chi(k) x_{h k} \right]$ is an integer group determinant and 
$$
\alpha_{\chi} \equiv \alpha_{\chi_{1}} \pmod{2}, 
$$
where $\chi_{1}$ is the trivial character of $K$. 
Suppose that $\Theta(G)$ is even. 
Then from Theorem~$\ref{thm:1}$ and the definition of $M$, 
$\alpha_{\chi}$ are divisible by $2^{M}$ for all $\chi \in \widehat{K}$. 
Therefore, $\Theta(G) = \prod_{\chi \in \widehat{K}} \alpha_{\chi}$ is divisible by $(2^{M})^{|K|} = (2^{M})^{2^{l}}$, 
where $|K|$ denotes the order of $K$. 
\end{proof}

\noindent
\thanks{{\bf Acknowledgments}}
We would like to thank everyone who provided suggestions for improving this paper.

\bibliography{reference}

\begin{thebibliography}{10}

\bibitem{MR3879399}
Ton Boerkoel and Christopher Pinner.
\newblock Minimal group determinants and the {L}ind-{L}ehmer problem for
  dihedral groups.
\newblock {\em Acta Arith.}, 186(4):377--395, 2018.

\bibitem{MR1659232}
Keith Conrad.
\newblock The origin of representation theory.
\newblock {\em Enseign. Math. (2)}, 44(3-4):361--392, 1998.

\bibitem{Frobenius1968gruppen}
Ferdinand~Georg Frobenius.
\newblock \"{U}ber die {P}rimfactoren der {G}ruppendeterminante.
\newblock {\em Sitzungsberichte der K\"{o}niglich Preu{\ss}ischen Akademie der
  Wissenschaften zu Berlin}, pages 1343--1382, 1896.
\newblock Reprinted in {\it Gesammelte Abhandlungen, Band III}. Springer-Verlag
  Berlin Heidelberg, New York, 1968, pages 38--77.

\bibitem{Frobenius1968gruppencharaktere}
Ferdinand~Georg Frobenius.
\newblock \"{U}ber {G}ruppencharaktere.
\newblock {\em Sitzungsberichte der K\"{o}niglich Preu{\ss}ischen Akademie der
  Wissenschaften zu Berlin}, pages 985--1021, 1896.
\newblock Reprinted in {\it Gesammelte Abhandlungen, Band III}. Springer-Verlag
  Berlin Heidelberg, New York, 1968, pages 1--37.

\bibitem{MR1554141}
Thomas Hawkins.
\newblock The origins of the theory of group characters.
\newblock {\em Arch. History Exact Sci.}, 7(2):142--170, 1971.

\bibitem{MR1085397}
K.~W. Johnson.
\newblock On the group determinant.
\newblock {\em Math. Proc. Cambridge Philos. Soc.}, 109(2):299--311, 1991.

\bibitem{johnson2019group}
K.W. Johnson.
\newblock {\em Group matrices, group determinants and representation theory:
  The mathematical legacy of Frobenius}.
\newblock Lecture Notes in Mathematics. Springer International Publishing,
  2019.

\bibitem{MR2914452}
Norbert Kaiblinger.
\newblock Progress on {O}lga {T}aussky-{T}odd's circulant problem.
\newblock {\em Ramanujan J.}, 28(1):45--60, 2012.

\bibitem{MR1606416}
T.~Y. Lam.
\newblock Representations of finite groups: a hundred years. {I}.
\newblock {\em Notices Amer. Math. Soc.}, 45(3):361--372, 1998.

\bibitem{MR624127}
H.~Turner Laquer.
\newblock Values of circulants with integer entries.
\newblock In {\em A collection of manuscripts related to the {F}ibonacci
  sequence}, pages 212--217. Fibonacci Assoc., Santa Clara, Calif., 1980.

\bibitem{MR550657}
Morris Newman.
\newblock On a problem suggested by {O}lga {T}aussky-{T}odd.
\newblock {\em Illinois J. Math.}, 24(1):156--158, 1980.

\bibitem{MR4056860}
Christopher Pinner and Christopher Smyth.
\newblock Integer group determinants for small groups.
\newblock {\em Ramanujan J.}, 51(2):421--453, 2020.

\bibitem{MR803326}
B.~L. van~der Waerden.
\newblock {\em A history of algebra}.
\newblock Springer-Verlag, Berlin, 1985.
\newblock From al-Khw\={a}rizm\={\i} to Emmy Noether.

\bibitem{MR3622295}
Naoya Yamaguchi.
\newblock An extension and a generalization of {D}edekind's theorem.
\newblock {\em Int. J. Group Theory}, 6(3):5--11, 2017.

\end{thebibliography}
\bibliographystyle{plain}

\medskip
\begin{flushleft}
Naoya Yamaguchi\\
Faculty of Education \\ 
University of Miyazaki \\
1-1 Gakuen Kibanadai-nishi\\ 
Miyazaki 889-2192 \\
JAPAN\\
n-yamaguchi@cc.miyazaki-u.ac.jp
\end{flushleft}

\medskip
\begin{flushleft}
Yuka Yamaguchi\\
Faculty of Education \\ 
University of Miyazaki \\
1-1 Gakuen Kibanadai-nishi\\ 
Miyazaki 889-2192 \\
JAPAN\\
y-yamaguchi@cc.miyazaki-u.ac.jp
\end{flushleft}

\end{document}